\newtheorem{theorem}{Theorem}
\newtheorem{conjecture}{Conjecture}
\newtheorem{lemma}{Lemma}
\newtheorem{remark}{Remark}
\begin{document}

\title{\large\bf Proof of a conjecture related to divisibility properties of binomial coefficients\footnote{This work was supported by the National
Natural Science Foundation of China, Grant No. 11371195. }}
\date{}
\author{Quan-Hui Yang\footnote{Email: yangquanhui01@163.com.}\\
\small School of Mathematics and Statistics,
\\ \small Nanjing University of Information Science and Technology,\\
\small  Nanjing 210044, P. R. China}
 \maketitle
\vskip 3mm

\begin{abstract} Let $a,b$ and $n$ be positive integers with $a>b$. In this note,
we prove that
$$(2bn+1)(2bn+3){2bn \choose bn}\bigg|3(a-b)(3a-b){2an \choose an}{an\choose bn}.$$ This
confirms a recent conjecture of Amdeberhan and Moll.

{\it 2010 Mathematics Subject Classifications:} 11B65, 05A10

{\it Keywords:} binomial coefficients, $p$-adic order, divisibility properties
\end{abstract}

\section{Introduction}
Let $\mathbb{Z}$ denote the set of all integers. In 2009, Bober \cite{bober} determined all
cases such that
$$\frac{(a_1n)!\cdots (a_kn)!}{(b_1n)!\cdots (b_{k+1}n)!}\in \mathbb{Z},$$
where $a_s\not=b_t$ for all $s,t$, $\sum{a_s}=\sum{b_t}$ and $\gcd(a_1,\ldots,a_k,b_1,\ldots,b_{k+1})=1$.

Recently, Z.-W. Sun \cite{sun12,sun13} studied divisibility properties of binomial
coefficients and obtained some interesting results. For example,
$$2(2n+1){2n \choose n}\bigg| {6n \choose 3n}{3n \choose n},$$
$$(10n+1){3n \choose n}\bigg| {15n \choose 5n}{5n-1 \choose n-1}.$$
Later, Guo and Krattenthaler (see \cite{guo,{guokra}}) got some similar divisibility results.
For related results, one can refer to \cite{Calkin}-\cite{Fine} and \cite{Guo07}-\cite{Razpet}.

Let $$S_n=\frac{{6n \choose 3n} {3n \choose n}}{2(2n+1){2n \choose n}}\quad \text{and} \quad
t_n=\frac{{15n \choose 5n}{5n-1 \choose n-1}}{(10n+1){3n \choose n}}.$$

In \cite{guonew}, Guo proved the following Sun's conjectures.

\noindent{\bf  Theorem A.} (See \cite[Conjecture 3(i)]{sun12}.){\em~ Let $n$ be a positive integer. Then
$$3S_n\equiv 0 \pmod {2n+3}.$$}
\noindent{\bf  Theorem B.} (See \cite[Conjecture 1.3]{sun13}.){\em ~ Let $n$ be a positive integer. Then
$$21t_n\equiv 0 \pmod {10n+3}.$$}
Some other results are also proved.

T. Amdeberhan and  V. H. Moll proposed the following
conjecture  (See Guo \cite[Conjecture 7.1]{guonew}).

\begin{conjecture}\label{conj} Let $a,b$ and $n$ be positive integers with $a>b$. Then
$$(2bn+1)(2bn+3){2bn \choose bn}\bigg|3(a-b)(3a-b){2an \choose an}{an\choose bn}.$$
\end{conjecture}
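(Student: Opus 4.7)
The plan is to verify the divisibility prime by prime; it suffices to show that for every prime $p$,
\[
v_p\bigl(3(a-b)(3a-b)\bigr)+v_p\!\binom{2an}{an}+v_p\!\binom{an}{bn}\;\ge\;v_p\bigl((2bn+1)(2bn+3)\bigr)+v_p\!\binom{2bn}{bn}.
\]
The principal tools are Legendre's formula $v_p(m!)=(m-s_p(m))/(p-1)$ and Kummer's theorem, which evaluates $v_p\binom{m+k}{k}$ as the number of carries produced when $m$ and $k$ are added in base $p$.

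The prime $p=2$ I would dispose of first. Since $2bn+1$ and $2bn+3$ are both odd, the right-hand side reduces to $v_2\binom{2bn}{bn}$, and a direct digit-sum calculation gives
\[
v_2\!\binom{2an}{an}+v_2\!\binom{an}{bn}-v_2\!\binom{2bn}{bn}=s_2((a-b)n)\ge 0.
\]
For an odd prime $p$, note that $\gcd(2bn+1,2bn+3)=1$, so $p$ divides at most one of the two factors. The critical structural observation is that if $p^k\|2bn+1$, then $bn\equiv(p^k-1)/2\pmod{p^k}$, so the bottom $k$ base-$p$ digits of $bn$ are each equal to $(p-1)/2$; in the base-$p$ addition $bn+bn$ these positions then produce digit $p-1$ with no outgoing carry, so they contribute nothing to $v_p\binom{2bn}{bn}$. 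An analogous statement holds when $p^k\|2bn+3$: the bottom $k$ digits of $bn$ are $(p-3)/2$ when $p>3$, while the case $p=3$ is handled separately using the prefactor~$3$.

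With the bottom $k$ digits of $bn$ pinned down, I would use Kummer's theorem together with the identity $an=bn+(a-b)n$ to propagate digit constraints from $bn$ to $an$ and $(a-b)n$, thereby lower-bounding the carries in the additions $an+an$ and $bn+(a-b)n$. The main obstacle will be the precise digit-level accounting: one must show that the $k$ ``missing'' contributions to $v_p\binom{2bn}{bn}$ at the bottom positions are always recouped, either by carries forced in $an+an$ or $bn+(a-b)n$, or by the $p$-adic content of the prefactor --- specifically $v_p(a-b)$ when $p\mid 2bn+1$ and $v_p(3a-b)$ when $p\mid 2bn+3$. That the prefactor is exactly $3(a-b)(3a-b)$ (and not something smaller) should correspond to the worst-case deficit being attained, and carrying out this matching in all subcases is the technical heart of the proof.
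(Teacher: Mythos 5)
Your strategy is sound and is essentially the paper's own argument translated from Legendre's floor sums into Kummer's carry language: reduce to a $\nu_p$-inequality for each prime, note that $\gcd(2bn+1,2bn+3)=1$, observe that $p^{\alpha}\Vert 2bn+1$ (resp.\ $2bn+3$) pins down $bn \bmod p^{\alpha}$ so that $bn+bn$ produces no carries in the bottom $\alpha$ positions, and then recover $\alpha$ carries from $an+an$ and $bn+(a-b)n$ with help from the prefactor. Your $p=2$ computation $\nu_2\binom{2an}{an}+\nu_2\binom{an}{bn}-\nu_2\binom{2bn}{bn}=s_2((a-b)n)$ is correct. Two small slips: for $p^{k}\Vert 2bn+3$ the bottom $k$ digits of $bn$ are $(p-1)/2$ except the lowest, which is $(p-3)/2$ (your no-carry conclusion still holds); and for $p\mid 2bn+3$ the argument needs \emph{both} $\nu_p(a-b)$ and $\nu_p(3a-b)$, not only the latter, while for $p\mid 2bn+1$ only $\nu_p(a-b)$ is used.

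The genuine gap is that you stop exactly where the theorem has to be proved: the ``matching in all subcases'' you defer is the entire content, and it is not evident from your outline that the propagation of digit constraints from $bn$ to $an$ and $(a-b)n$ actually closes up. What the paper proves at this point is the following. Put $p^{\alpha}\Vert 2bn+3$, $p^{\beta}\Vert a-b$, $p^{\gamma}\Vert 3a-b$, $\tau=\max\{\beta,\gamma\}$, and assume $\alpha>\tau$ (else done). For each $i$ with $\tau<i\le\alpha$ one has $bn\equiv (p^i-3)/2 \pmod{p^i}$, and the residue $t=an\bmod p^i$ cannot equal $(p^i-3)/2$ (that would force $p^i\mid a-b$, since $\gcd(p,n)=1$) nor $(p^i-1)/2$ (that would force $p^i\mid 3a-b$); for every other $t$ a direct evaluation of $\lfloor 2an/p^i\rfloor+\lfloor bn/p^i\rfloor-\lfloor an/p^i\rfloor-\lfloor (an-bn)/p^i\rfloor-\lfloor 2bn/p^i\rfloor$ shows it equals $1$, i.e.\ exactly one carry occurs at level $i$ in $an+an$ or in $bn+(an-bn)$. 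Hence $\nu_p(T)\ge\alpha-\tau$ with $T=\binom{2an}{an}\binom{an}{bn}/\binom{2bn}{bn}$, which together with $\nu_p((a-b)(3a-b))\ge\tau$ gives the claim. The case $p=3$ needs separate care precisely because $\gcd(3,n)=1$ can fail: if $9\mid n$ then $\alpha=1$ and the explicit factor $3$ suffices, while if $9\nmid n$ the exceptional residues only force $3^{i-1}\mid a-b$ or $3^{i-1}\mid 3a-b$, so one level is lost and must be recouped by the factor $3$. Until this case analysis (or an equivalent carry-matching argument) is written out, what you have is a correct plan rather than a proof.
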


In this note, we give the proof of this conjecture.

\begin{theorem}\label{thm1} Conjecture \ref{conj} is true.
\end{theorem}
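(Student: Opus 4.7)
The natural strategy is to verify the divisibility one prime at a time. For every prime $p$, I would show
\[
v_p(2bn+1)+v_p(2bn+3)+v_p\tbinom{2bn}{bn}\le v_p(3)+v_p(a-b)+v_p(3a-b)+v_p\tbinom{2an}{an}+v_p\tbinom{an}{bn}.
\]
A useful first reduction is that $\gcd(2bn+1,2bn+3)=1$: they differ by $2$ and both are odd. So every odd prime $p$ divides at most one of $2bn+1$, $2bn+3$, and $v_2(2bn+1)=v_2(2bn+3)=0$. The argument therefore splits into three cases according to whether $p\mid 2bn+1$, $p\mid 2bn+3$, or neither.

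\emph{Generic case} $p\nmid(2bn+1)(2bn+3)$. Here the inequality collapses to $v_p\binom{2bn}{bn}\le v_p\binom{2an}{an}+v_p\binom{an}{bn}$, i.e.\ $\binom{2bn}{bn}\mid\binom{2an}{an}\binom{an}{bn}$. Using the identity
\[
\frac{\binom{2an}{an}\binom{an}{bn}}{\binom{2bn}{bn}}=\frac{\binom{2an}{2bn}\binom{2(a-b)n}{(a-b)n}}{\binom{an}{bn}},
\]
the problem reduces to $\binom{an}{bn}\mid\binom{2an}{2bn}\binom{2(a-b)n}{(a-b)n}$, which follows from a short Kummer carry-count, with the super-Catalan integrality $\binom{2bn}{bn}\binom{2(a-b)n}{(a-b)n}/\binom{an}{bn}\in\mathbb{Z}$ as a convenient side input.

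\emph{Substantive cases} $p\mid 2bn+\varepsilon$ for $\varepsilon\in\{1,3\}$ (so $p$ is odd). Setting $s=v_p(2bn+\varepsilon)$, the congruence $2bn\equiv-\varepsilon\pmod{p^s}$ pins down the last $s$ base-$p$ digits of $bn$: uniformly $(p-1)/2$ when $\varepsilon=1$, and a near-uniform pattern when $\varepsilon=3$. I would then apply Kummer's theorem to write each of $v_p\binom{2bn}{bn}$, $v_p\binom{2an}{an}$, and $v_p\binom{an}{bn}$ as explicit sums over digit positions counting carries in the additions $bn+bn$, $an+an$, and $bn+(a-b)n$ in base $p$, and verify the inequality digit by digit. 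The factors $(a-b)$ and $(3a-b)$ supply the $p$-adic slack needed in the $\varepsilon=1$ and $\varepsilon=3$ regimes respectively --- indeed this is what singles them out in the statement of the conjecture --- while the extra factor $3$ on the right is essential exactly when $p=3$ and $3\mid 2bn+3$.

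\textbf{Main obstacle.} The technical heart is the carry-count bookkeeping in the substantive cases, especially for $\varepsilon=3$: the digit pattern $2bn\equiv-3\pmod{p^s}$ imposes on $bn$ is irregular in its lowest digit, so one must carefully track how this propagates to $an=bn+(a-b)n$ and then to $2an$ in base $p$, and match the resulting shortfall precisely against $v_p(3a-b)$ (using the relation $3a-b=3(a-b)+2b$). Once this matching is established --- together with a routine check of $p=2,3$ --- the $p$-adic inequality holds for every prime and every admissible $(a,b,n)$, proving Conjecture~\ref{conj}.
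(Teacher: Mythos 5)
Your overall strategy coincides with the paper's: reduce to a prime-by-prime inequality of $p$-adic valuations, use $\gcd(2bn+1,2bn+3)=1$ to split into the cases $p\mid 2bn+1$, $p\mid 2bn+3$, and neither, handle the generic case by the integrality of $T(a,b,n)=\binom{2an}{an}\binom{an}{bn}\big/\binom{2bn}{bn}$ (your identity rewriting this as $\binom{2an}{2bn}\binom{2(a-b)n}{(a-b)n}\big/\binom{an}{bn}$ is correct, and the carry inequality it requires is exactly the paper's Lemma~\ref{lem1} in Kummer language), and treat the remaining primes by digit analysis of $bn$, $an$, $2an$ modulo powers of $p$. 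However, you never carry out the computation you yourself flag as ``the technical heart'': the digit-by-digit verification in the substantive cases is asserted, not performed, so as it stands this is an outline rather than a proof.

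More importantly, the one concrete guide you give for that computation is wrong and would derail a literal execution. You claim $a-b$ and $3a-b$ supply the slack ``in the $\varepsilon=1$ and $\varepsilon=3$ regimes respectively,'' and propose to match the $\varepsilon=3$ shortfall ``precisely against $v_p(3a-b)$.'' In fact the $\varepsilon=3$ regime needs both factors: for $p\ge 5$ and $p^i\mid 2bn+3$, the floor/carry count at level $i$ falls short of $1$ in exactly two subcases, namely $an\equiv bn\pmod{p^i}$ (which forces $p^i\mid a-b$, since $p\nmid n$) and $an\equiv (p^i-1)/2\pmod{p^i}$ (which forces $p^i\mid 3a-b$, because then $3an-bn\equiv 0$). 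In the first subcase one has $3a-b\equiv 2b\equiv -3n^{-1}\not\equiv 0\pmod p$, so $v_p(3a-b)=0$ and the deficiency cannot be charged to $3a-b$; it must be charged to $a-b$. (Conversely, for $\varepsilon=1$ only the subcase $an\equiv bn$ arises, so only $a-b$ is needed there.) You would also still need the explicit $p=3$ bookkeeping, where the extra factor $3$ absorbs one level and the congruences only yield $3^{i-1}\mid a-b$ or $3^{i-1}\mid 3a-b$. None of this is fatal to the method --- the paper completes exactly this plan --- but as written your proposal both omits the decisive computation and misstates how the factors $(a-b)(3a-b)$ enter it.
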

\begin{remark} Let $a=3$ and $b=1$. Then Theorem A follows from Theorem \ref{thm1} immediately.
\end{remark}

\section{Proofs}

For an integer $n$ and a prime $p$, we write $p^{k}\|n$ if $p^k|n$ and $p^{k+1}\nmid n$.
We use $\nu_p(n)$ to denote such integer $k$. It is well known that
\begin{equation}\label{eq1}\nu_p{(n!)}=\sum_{i=1}^{\infty}\left\lfloor \frac {n}{p^i}\right\rfloor,
\end{equation}
where $\left\lfloor x\right\rfloor$ denotes the greatest integer not exceeding $x$.

Before the proof of Theorem \ref{thm1}, we give the following lemma.

\begin{lemma}\label{lem1} Let $x$ and $y$ be two real numbers. Then
$\left\lfloor 2x\right\rfloor +\left\lfloor y\right\rfloor \ge \left\lfloor x\right\rfloor +\left\lfloor x-y\right\rfloor +\left\lfloor 2y\right\rfloor.$
\end{lemma}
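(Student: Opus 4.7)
The plan is to reduce the inequality to one that only involves the fractional parts of $x$ and $y$, and then dispatch it with a short case split. Write $x = \lfloor x\rfloor + \{x\}$ and $y = \lfloor y\rfloor + \{y\}$ with $\{x\},\{y\} \in [0,1)$. Using the standard identities
\[
\lfloor 2x\rfloor = 2\lfloor x\rfloor + \lfloor 2\{x\}\rfloor, \qquad \lfloor x-y\rfloor = \lfloor x\rfloor - \lfloor y\rfloor + \lfloor \{x\}-\{y\}\rfloor,
\]
I would substitute into both sides and verify that the integer parts $\lfloor x\rfloor$ and $\lfloor y\rfloor$ contribute $2\lfloor x\rfloor + \lfloor y\rfloor$ to each side, so they cancel. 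What remains is the purely fractional inequality
\[
\lfloor 2\{x\}\rfloor \;\geq\; \lfloor \{x\}-\{y\}\rfloor + \lfloor 2\{y\}\rfloor.
\]

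Setting $u=\{x\}$, $v=\{y\}$ in $[0,1)$, note $u-v \in (-1,1)$, so $\lfloor u-v\rfloor \in \{-1,0\}$, giving two cases. If $u\geq v$, then $\lfloor u-v\rfloor = 0$ and the claim becomes $\lfloor 2u\rfloor \geq \lfloor 2v\rfloor$, which follows from monotonicity of the floor of $2\cdot$. If $u<v$, then $\lfloor u-v\rfloor = -1$ and I only need $\lfloor 2u\rfloor + 1 \geq \lfloor 2v\rfloor$; since $u,v \in [0,1)$ we have $\lfloor 2u\rfloor \geq 0$ and $\lfloor 2v\rfloor \leq 1$, which settles this case.

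Since the argument is elementary, I do not expect a serious obstacle. The only place where care is needed is the bookkeeping in the reduction step: one must double-check that $\lfloor x\rfloor - \lfloor y\rfloor$ is an integer (so can be pulled outside the floor in the expansion of $\lfloor x-y\rfloor$) and that the coefficients of $\lfloor x\rfloor$ and $\lfloor y\rfloor$ on the two sides agree. Once the inequality is reduced to fractional parts, the two-case check above closes the proof.
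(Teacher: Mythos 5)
Your proof is correct and follows essentially the same route as the paper: both reduce the inequality to a statement about the fractional parts $\{x\},\{y\}$ (using that the arguments on the two sides have equal sum, equivalently by cancelling the integer parts as you do) and then finish with a short case check. Your case split on $\{x\}\ge\{y\}$ versus $\{x\}<\{y\}$ is a clean way to carry out the verification that the paper sketches by ``comparing $\{x\}$ and $\{y\}$ with $1/2$'' and leaves to the reader.
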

\begin{proof} Noting that $2x+y=x+(x-y)+2y$, we only need to prove that
$\{2x\}+\{y\}\le \{x\}+\{x-y\}+\{2y\}$, where $\{z\}$ denotes the fractional part of $z$.
We can prove it by comparing $\{x\}$ and $\{y\}$ with 1/2. We leave the proof to the reader.
\end{proof}

\begin{proof}[Proof of Theorem \ref{thm1}] Let
$$T(a,b,n):={2an \choose an}{an \choose bn}\bigg/{2bn \choose bn}=\frac{(2an)!(bn)!}{(an)!(an-bn)!(2bn)!}.$$
By \eqref{eq1}, for any prime $p$, we have
$$\nu_p(T(a,b,n))=\sum_{i=1}^{\infty}\left( \left\lfloor \frac{2an}{p^i}\right\rfloor
+\left\lfloor\frac{bn}{p^i}\right\rfloor-\left\lfloor\frac{an}{p^i}\right\rfloor-\left\lfloor\frac{an-bn}
{p^i}\right\rfloor-\left\lfloor\frac{2bn}{p^i}\right\rfloor\right).$$
By Lemma \ref{lem1}, it follows that each term of $\nu_p(T(a,b,n))$ is nonnegative.
Hence $\nu_p(T(a,b,n))\ge 0$. Therefore, $T(a,b,n)\in \mathbb{Z}$.

Since $\gcd(2bn+1,2bn+3)=1$, it suffices to prove that
$$2bn+1|3(a-b)(3a-b)T(a,b,n)$$ and $$2bn+3|3(a-b)(3a-b)T(a,b,n).$$

Now we first prove the latter part.
Suppose that $p^{\alpha}\| 2bn+3$ with $\alpha\ge 1$. Then we shall prove
\begin{equation}\label{eq5}p^{\alpha}| 3(a-b)(3a-b)T(a,b,n).\end{equation}

Let $p^{\beta}\|a-b$ and $p^{\gamma}\|3a-b$
with $\beta\ge 0$ and $\gamma\ge 0$. Write $\tau=\max\{\beta,\gamma\}$.
If $\alpha \le \tau$,
then \eqref{eq5} clearly holds. Now we assume $\alpha > \tau$.

Suppose that $p\ge 5$.
Next we prove
$$\left\lfloor \frac{2an}{p^i}\right\rfloor+\left\lfloor \frac{bn}{p^i}\right\rfloor-\left\lfloor\frac{an}{p^i}\right\rfloor-\left\lfloor\frac{an-bn}
{p^i}\right\rfloor-\left\lfloor\frac{2bn}{p^i}\right\rfloor=1$$
for $i=\tau+1,\tau+2,\ldots,\alpha$. Noting that $p|2bn+3$ and $p\ge 5$, we have $\gcd(p,n)=1$. Otherwise,
we have $p|3$, a contradiction.

By $p^{\alpha}\| 2bn+3$, it follows that $2bn \equiv p^{\alpha}-3 \pmod {p^{\alpha}}$
and $bn\equiv (p^{\alpha}-3)/2 \pmod {p^{\alpha}}$.

Take an integer $i\in \{\tau+1,\tau+2,\ldots,\alpha\}$. Then
$2bn \equiv p^{i}-3 \pmod {p^{i}}$
and $bn\equiv (p^{i}-3)/2 \pmod {p^{i}}$.
Now we divide into several cases according to the value of $an \pmod {p^i}$.

{\bf Case 1.} $an\equiv t \pmod {p^i}$ with $0\le t<(p^i-3)/2$. It follows that
$2an\equiv 2t \pmod {p^i}$ and $0\le 2t< p^i-3$. We also have $$an-bn\equiv t-(p^i-3)/2+p^i
\pmod {p^i},$$ where $0\le t-(p^i-3)/2+p^i<p^i$.
Hence
\begin{eqnarray*}&&\left\lfloor \frac{2an}{p^i}\right\rfloor+\left\lfloor \frac{bn}{p^i}\right\rfloor-\left\lfloor\frac{an}{p^i}\right\rfloor-\left\lfloor\frac{an-bn}
{p^i}\right\rfloor-\left\lfloor\frac{2bn}{p^i}\right\rfloor\\
&=&\frac{2an-2t}{p^i}+\frac{bn-(p^i-3)/2}{p^i}-\frac{an-t}{p^i}\\
&&-
\left(\frac{an-bn-(t-(p^i-3)/2+p^i)}{p^i}\right)-\frac{2bn-(p^i-3)}{p^i}\\
&=&1.
\end{eqnarray*}

{\bf Case 2.} $an\equiv ({p^i}-3)/2 \pmod {p^i}$. Clearly, $an-bn\equiv 0 \pmod {p^i}$. Since
$\gcd(p,n)=1$, we have $p^i|a-b$. However, $p^{\beta}\|a-b$ and $\beta\le \tau<i$, a contradiction.

{\bf Case 3.} $an\equiv ({p^i}-1)/2 \pmod {p^i}$. It follows that
$$3an-bn\equiv \frac {3({p^i}-1)}{2}-\frac{({p^i}-3)}{2}\equiv  0 \pmod {p^i}.$$
By $\gcd(p,n)=1$, we have $p^i|3a-b$. This contradicts $p^{\gamma}\|3a-b$ and $\gamma<i$.

{\bf Case 4.} $an\equiv t \pmod {p^i}$ with $(p^i+1)/2\le t<p^i$. It follows that
$$2an\equiv 2t-p^i \pmod {p^i}, \quad 0\le 2t-p^i< p^i.$$ We also have $$an-bn\equiv t-(p^i-3)/2
\pmod {p^i},\quad 0\le t-(p^i-3)/2<p^i.$$
Hence
\begin{eqnarray*}&&\left\lfloor \frac{2an}{p^i}\right\rfloor+\left\lfloor \frac{bn}{p^i}\right\rfloor-\left\lfloor\frac{an}{p^i}\right\rfloor-\left\lfloor\frac{an-bn}
{p^i}\right\rfloor-\left\lfloor\frac{2bn}{p^i}\right\rfloor\\
&=&\frac{2an-(2t-p^i)}{p^i}+\frac{bn-(p^i-3)/2}{p^i}-\frac{an-t}{p^i}\\
&&-
\left(\frac{an-bn-(t-(p^i-3)/2)}{p^i}\right)-\frac{2bn-(p^i-3)}{p^i}\\
&=&1.
\end{eqnarray*}

Therefore, $\nu_p(T(a,b,n))\ge \alpha-\tau$, and then $$\nu_p(3(a-b)(3a-b)T(a,b,n))\ge \alpha.$$
That is, \eqref{eq5} holds.

Now we deal with the case $p=3$. If $9|n$, then $3|2bn+3$ and $9\nmid 2bn+3$. It follows that $\alpha=1$,
and then \eqref{eq5} clearly holds.
If $9\nmid n$, then we can follow the proof of the case $p\ge 5$ above. In Case 2, by $an-bn\equiv 0 \pmod {3^i}$,
we have $3^{i-1}|a-b$. In Case 3, we have $3^{i-1}|3a-b$. So, if $i\ge \tau+2$, then $i-1\ge \tau+1$. 
It is a contradiction in both cases. Hence
$$\left\lfloor \frac{2an}{3^i}\right\rfloor+\left\lfloor \frac{bn}{3^i}\right\rfloor-\left\lfloor\frac{an}{3^i}\right\rfloor-\left\lfloor\frac{an-bn}
{3^i}\right\rfloor-\left\lfloor\frac{2bn}{3^i}\right\rfloor=1$$
for $i=\tau+2,\tau+3,\ldots,\alpha$.
It follows that $\nu_3(T(a,b,n))\ge \alpha-\tau-1$, and then $$\nu_3(3(a-b)(3a-b)T(a,b,n))\ge \alpha.$$
That is, \eqref{eq5} also holds.
Hence, $2bn+3|3(a-b)(3a-b)T(a,b,n)$.

The proof of $2bn+1|3(a-b)(3a-b)T(a,b,n)$ is very similar. We omit it here.

This completes the proof of Theorem \ref{thm1}.

\end{proof}


\clearpage

\end{document}